\documentclass[11pt]{amsart}
\usepackage{amsmath,amssymb,amsthm,mathrsfs}
\usepackage[T1]{fontenc}
\usepackage{enumerate}
\usepackage{verbatim}
\usepackage[noadjust]{cite}
\usepackage[dvipsnames]{xcolor}
\usepackage[colorlinks=true, linkcolor=BrickRed, urlcolor=Blue,citecolor=OliveGreen,menucolor=]{hyperref}

\vfuzz2pt 
\hfuzz2pt 

\theoremstyle{plain}
\newtheorem{thm}{Theorem}

\newtheorem{prop}[thm]{Proposition}
\theoremstyle{definition}
\newtheorem{defn}[thm]{Definition}
\theoremstyle{remark}
\newtheorem{rem}[thm]{Remark}

\renewcommand{\H}{\mathcal{H}}

\newcommand{\E}{\mathbb E}
\newcommand{\CP}{\mathbb{CP}}
\renewcommand{\phi}{\varphi}

\DeclareMathOperator{\Gr}{Gr}
\DeclareMathOperator{\Grr}{Gr_{res}(\H)}
\DeclareMathOperator{\Grrj}{Gr_{res}^1(\H)}
\DeclareMathOperator{\Grrp}{Gr_{res}^p(\H)}

\DeclareMathOperator{\Tr}{Tr}

\newcommand{\emphh}[1]{{\bf #1}}

\begin{document}
\title[A note on precotangent bundles: the example of Grassmannians]{A note on precotangent bundles:\\ the example of Grassmannians}
\author[T.~Goli\'nski]{Tomasz Goli\'nski}
\address{University of Bia\l ystok\\ Cio\l kowskiego 1M\\15-245 Bia\l ystok\\ Poland}
\email{tomaszg@math.uwb.edu.pl}

\begin{abstract}We prove the existence of the bundle predual to the tangent bundle (called precotangent bundle) for Grassmannians of reflexive Banach spaces and $p$-restricted Grassmannians of the polarized Hilbert space.
\end{abstract}
\subjclass{58B99, 46T05, 14M15}
\keywords{Banach manifold, predual space, subbundle, Grassmannian}

\maketitle

\tableofcontents

\section{Introduction}
The problem of finding a criterion for a Banach space to be the dual space of some other Banach space (called a predual space) is one of the open problems of the functional analysis. It is related for example to the Sakai theorem for $W^*$-algebras, which states that $W^*$-algebras are exactly those $C^*$-algebras which admit a predual space. From the point of view of infinite dimensional geometry, considering predual spaces is often more natural and convenient than dual spaces. For example, as demonstrated in \cite{OR}, it is not dual of a Banach Lie algebra, but a predual which carries the canonical Poisson structure (albeit under additional conditions). Such a predual space is called then a Banach Lie--Poisson space, see also \cite{Oext,beltita05,Ratiu-grass,Oind,GO-grass,GT-momentum} for more results and applications of this notion. In the same spirit, considering the bundle predual to the given Banach vector bundle makes it easier to study Poisson structures and their relationship with Banach Lie algebroid structures, see \cite{GJ-algebroid}.

The notion of \emph{the precotangent bundle} (i.e. the predual bundle to the tangent bundle) first appeared in the paper \cite{OR} in the context of Banach Lie groups. It was defined as a certain subbundle of the cotangent bundle having the property that duals of the fibers are equal to the tangent spaces. In that paper, it was shown that the precotangent bundle $T_*G$ of a Banach Lie group $G$ exists given that its Banach Lie algebra admits a predual space. In that case $T_*G$ carries the canonical weak symplectic form. It was also demonstrated that this form on the precotangent bundle $T_*G$ (unlike the canonical form on $T^*G$) leads to a well-defined Poisson bracket on $C^\infty(G)$. In general however, it might not define Hamiltonian vector fields for all smooth functions (i.e. $T_*G$ is not a Banach Poisson manifold in the sense of definition given in \cite{OR}). Note that there are also more general definitions of Poisson manifolds in Banach setting, admitting the case when the Poisson bracket is not defined for all smooth functions, see e.g. \cite{pelletier,neeb14,tumpach-bruhat,GRT-poisson-bial}). Using such a definition it is possible to define Poisson brackets for weak symplectic forms (e.g. either on $T_*M$ or $T^*M$).

As far as we know, the problem of the existence of $T_*M$ for other Banach manifolds $M$ has not yet been addressed. However in the paper \cite{GJ-algebroid}, assuming the existence of $T_*M$, the relationship between the Banach Lie algebroid structure on $E\to M$ and the Poisson bracket given by the canonical form on $T_*M$ was established. In this context the problem of existence of the precotangent bundle is a special case of more general problem of existence of the predual bundle to an arbitrary Banach vector bundle.

For the purposes of this paper, we will formulate the following definition:
\begin{defn}
    The precotangent bundle $T_*M$ of a smooth Banach manifold $M$ is a Banach subbundle of $T^*M$ such that
\[ ({T_*}_p M)^* = T_pM. \]
\end{defn}
Note that in general, due to the non-uniqueness of the predual, the cotangent bundle is also not defined uniquely. 

In this paper, we present several examples of Banach manifolds for which the precotangent bundle exists. Obviously, all manifolds modeled on reflexive Banach spaces trivially fall into this category.

Section~\ref{sec:init} is devoted to the basic discussion of the differential structure of the Grassmannian of a Banach space. In Section~\ref{sec:hilbert} we present in detail the simplest example: the Grassmannian of closed subspaces of a Hilbert space. Section~\ref{sec:grres} is devoted to the discussion of the family of $p$-restricted Grassmannians. In Section~\ref{sec:reflex} we prove the existence of the precotangent bundle for the Grassmannian of a reflexive Banach space.

\section{Initial glance: Grassmannian of a Banach space}\label{sec:init}

Let us consider the Grassmannian $\Gr(\E)$ of a Banach space $\E$. It is defined as the set of split subspaces of $\E$, i.e. closed subspaces $F\subset \E$ admitting a complementary subspace $G$. It has a Banach manifold structure, which was described e.g. in \cite{ratiu-mta}. The charts are indexed by pairs of complementary closed subspaces $F$ and $G$, while the modeling spaces are the Banach space of bounded linear maps $L(F,G)$. 

The question of the existence of the precontangent bundle to $\Gr(\E)$ in the most general case is too wide since even the problem of finding sufficient and necessary conditions for the existence of the predual space to $L(F,G)$ is open as far as we know. We will apply some of the known results in Section~\ref{sec:reflex}.

Let us briefly recall the manifold structure on the Grassmannian $\Gr(\E)$ following \cite[3.1.8.G]{ratiu-mta}. For two closed complementary subspaces $F,G\in\Gr(\E)$, we define the following projections
$\pi_F(G)$, $\pi_G(F)$ with respect to the decomposition $\E = F\oplus G$:
\[ \pi_F(G): E \to G\qquad \pi_G(F) : E \to F.\]
Define the following set
\[ \Omega_G = \{ H \in \Gr(\E) \;|\; H\oplus G = \E \}.\]
By $\phi_{F,G}: \Omega_G\to L(F,G)$ we denote a chart associated with the pair $(F,G)$:
\begin{equation}\label{chart_banach}\phi_{F,G}(H) = \pi_F(G)_{|H}\pi_G(F)_{|H}^{-1}.\end{equation}
The motivation behind the formula \eqref{chart_banach} is as follows: the inverse $\phi_{F,G}^{-1}$ of a chart associates to a linear bounded operator its graph in $F\oplus G = \E$, which is always a closed subspace of $\E$. Obviously, considering all possible decompositions of $\E$ into a pair of closed subspaces, we conclude that the sets $\Omega_G$ for all $G\in\Gr(\E)$ cover the Grassmannian $\Gr(\E)$.

The transition map $\psi_{(F',G'),(F,G)}$ 
can be written as
\begin{equation}\label{trans:banach}
\psi_{(F',G'),(F,G)}(A) = \phi_{F',G'}\circ\phi_{F,G}^{-1}(A) =
\pi_{F'}(G')(1+A)\big(\pi_{G'}(F')(1+A)\big)^{-1}
\end{equation}
for $A\in \phi_{F,G}(\Omega_G\cap \Omega_{G'})\subset L(F,G)$. 

By the usual construction, the transition maps for the tangent bundle $T\Gr(\E)$ are 
\begin{equation}\label{tangent_trans}
\Psi_{(F',G'),(F,G)}(A,X) = 
T\psi_{(F',G'),(F,G)}(A)(X),
\end{equation}
where $X\in L(F,G)$. The transition maps $\tilde\Psi_{V,W}$ for the cotangent bundle $T^*\Gr(\H)$ are given by
\begin{equation}\label{trans_cotan}
\tilde\Psi_{(F',G'),(F,G)} = \big( \Psi_{(F',G'),(F,G)}^{-1}\big)^* = \big( \Psi_{(F,G),(F',G')}\big)^*,
\end{equation} 
where $^*$ denotes the dual map.

\section{Grassmannian of the Hilbert space}\label{sec:hilbert}
We will now focus on the simplest case, where $\E$ is a complex separable Hilbert space $\H$. The situation simplifies considerably. First of all, closed subspaces automatically admit a complementary subspace due to the existence of the orthogonal complement. It is also enough to consider charts for a pairs $(V,V^\perp)$ and modeling spaces in the form $L(V,V^\perp)$, where $V$ is a closed subspace of $\H$. Moreover all closed subspaces of $\H$ are reflexive and satisfy the approximation property. The predual space to $L(V,V^\perp)$ is the space of trace-class operators $L^1(V^\perp,V)$. 

It will be useful to associate with the closed subspace $V$ an orthogonal projector $P_V$ onto $V$. From this point of view, $\Gr(\H)$ can be seen as the set of all orthogonal projectors acting on $\H$. 

In order to describe the precotangent bundle $T_*\Gr(\H)$, let us first write the transition maps for $\Gr(\H)$ and $T\Gr(\H)$ presented in the previous section more explicitly using orthogonal projectors borrowing the notation from \cite{GJS-partiso}. By $\phi_V: \Omega_V\to L(V,V^\perp)$ we denote a chart associated to the element of the Grassmannian $V\in\Gr(\H)$ and $V^\perp$:
\begin{equation}\label{chart}\phi_{V}(W) = (P_V)^\perp P_W P_V (P_V P_W P_V)^{-1},\end{equation}
where the chart domain $\Omega_V$ consists of elements of $\Gr(\H)$ such that projection from $V$ onto that element is an isomorphism, or equivalently, their orthogonal complement is complementary to $V$ as a Banach subspace:
\[ \Omega_V = \{ W \in \Gr(\H) \;|\; V\oplus W^\perp = \H \}.\]
The transition map $\psi_{V,W}$ then looks as follows:
\[
\psi_{V,W}(A) = \phi_{V}\circ\phi_W^{-1}(A) =
P_{V^\perp}(1_W + A)\big(P_V(P_W + A)\big)^{-1}
\]
for $A\in \phi_W(\Omega_W\cap\Omega_V)\subset L(W,W^\perp)$. In consequence, using \eqref{tangent_trans}, the transition maps for the tangent bundle $T\Gr(\H)$ assume the form
\begin{multline*}
\Psi_{V,W}(A,X) = 
T\psi_{V,W}(A)(X) = \big(\psi_{V,W}(A),
P_{V^\perp} X \big(P_V(P_W + A)\big)^{-1}\\
- P_{V^\perp}(1_W + A)\big(P_V(P_W + A)\big)^{-1}P_V X \big(P_V(P_W + A)\big)^{-1}\big),
\end{multline*}
where $X\in L(W,W^\perp)$.

Now, the precotangent bundle is obtained by considering a subbundle of $T^*\Gr(\H)$ with fibers modeled on predual spaces to $L(V,V^\perp)$, i.e. $L^1(V^\perp,V)$. 

\begin{thm}
The precotangent bundle $T_*\Gr(\H)$ exists and locally it is defined as
\[ T_*\Gr(\H)_{|\Omega_V} = (T\phi_V)^*(L^1(V^\perp, V)). \]
\end{thm}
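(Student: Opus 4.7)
The plan is to take the stated formula as the definition of the local trivializations of $T_*\Gr(\H)$ over each chart $\Omega_V$, and then verify two things: fibrewise duality, and consistency on chart overlaps $\Omega_V\cap\Omega_W$, so that the local pieces glue into a genuine subbundle of $T^*\Gr(\H)$. The first point is essentially immediate: over $\Omega_V$ the trivialization $T\phi_V$ identifies the tangent fibers with $L(V,V^\perp)$, whose predual is $L^1(V^\perp,V)$ via the trace pairing $\langle X,\alpha\rangle = \Tr(X\alpha)$, so the transpose $(T\phi_V)^*$ sends $\Omega_V\times L^1(V^\perp,V)$ into $T^*\Gr(\H)|_{\Omega_V}$ with fibers whose duals are exactly the tangent fibers.

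The substantive step is the consistency check. For this I would rewrite the explicit tangent transition given earlier in the schematic ``sandwich'' form
\[
T\psi_{V,W}(A)(X) = B_1(A)\, X\, C_1(A) + B_2(A)\, X\, C_2(A),
\]
where, with $C(A)=P_V(P_W+A)\colon W\to V$, one takes $B_1(A)=P_{V^\perp}\big|_{W^\perp}$, $B_2(A)=-P_{V^\perp}(1_W+A)\,C(A)^{-1}P_V\big|_{W^\perp}$, and $C_1(A)=C_2(A)=C(A)^{-1}$. Each factor $B_i(A)$, $C_i(A)$ is a bounded operator depending smoothly on $A\in L(W,W^\perp)$. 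A direct computation using cyclicity of the trace then shows that the induced map on preduals reads
\[
\alpha \;\longmapsto\; C_1(A)\,\alpha\,B_1(A) + C_2(A)\,\alpha\,B_2(A),
\]
which carries $L^1(V^\perp,V)$ into $L^1(W^\perp,W)$ by the two-sided ideal property of trace-class operators, and manifestly depends smoothly on $A$. This is exactly the statement that the cotangent transition $\tilde\Psi_{V,W}=(\Psi_{W,V})^*$ preserves the trace-class subspace.

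Combining these two observations, the local pieces $(T\phi_V)^*(L^1(V^\perp,V))$ glue into a smooth Banach subbundle of $T^*\Gr(\H)$ with the stated local description, whose fibers are by construction preduals of the tangent fibers. I expect the main obstacle to be precisely the key step above: recognizing that the tangent transition has the sandwich structure indicated and then tracking the domains and codomains of all factors carefully enough to conclude that, after dualization, every composition genuinely lands in $L^1(W^\perp,W)$. Once this bookkeeping is done, the two-sided ideal property of trace-class operators closes the argument, and smoothness is inherited from the smoothness of $A\mapsto C(A)^{-1}$.
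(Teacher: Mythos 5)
Your proposal is correct and follows essentially the same route as the paper: define the bundle locally via the trace pairing and check that the cotangent transition maps preserve the trace-class operators, which follows because the tangent transitions act fibrewise by two-sided multiplication with bounded operators and $L^1$ is a two-sided ideal. (The paper simply writes out the explicit formula for $\mu'$ and observes it is trace class; your ``sandwich'' decomposition plus cyclicity of the trace is the same argument in the slightly more abstract form the paper itself uses later for the reflexive Banach space case.)
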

\begin{proof}By definition the fibers of $T_*\Gr(\H)_{|\Omega_V}$ are Banach spaces. Thus locally we get a Banach vector bundle over $\Omega_V$ with fibers predual to fibers of $T\Gr(\H)_{|\Omega_V}$.
We need to show that this definition makes sense globally. Let us consider an element of the cotangent bundle $\tilde\mu\in T^*_U\Gr(\H)$ over the subspace $U\in\Omega_V$ with $A=\phi_V(U)$.
In a chart $\phi_V$ the functional $\tilde\mu$ is given by a trace class operator $\mu\in L^1(V^\perp,V)$ in the following way
\[ \langle \tilde\mu ; \tilde v \rangle = \Tr(\mu v) \]
for $v \in L^1(V,V^\perp)$ and $\tilde v = (T_U\phi_V)^{-1} (A,v)\in T_U\Gr(\H)$. In another chart $\phi_W$, such that $U\in\Omega_W$, the element $\tilde\mu$ is represented by an element $\mu'$ given by
\[
(A',\mu') = \tilde\Psi_{W,V}(A,\mu) = \Psi_{V,W}^*(A,\mu),
\]
where $A'=\psi_{W,V}(A)$ and 
\begin{equation}\label{trans-cotan}
\mu' = \big(P_W(P_V+A)\big)^{-1} \mu P_{W^\perp}\big(1-(1_V+A)\big(P_W(P_V+A)\big)^{-1}P_W\big).
\end{equation} 
It is again a trace class operator. Thus the space of trace class operators is preserved by transition maps for the cotangent bundle. In consequence the locally defined  precotangent bundle $T_*\Gr(\H)_{|\Omega_V}$ extends to the global Banach vector bundle $T_*\Gr(\H)$.
\end{proof}

\section{$p$-restricted Grassmannians}\label{sec:grres}
Consider now a polarized Hilbert space, i.e. a separable complex Hilbert space with a fixed orthogonal decomposition
\[ \H = \H_- \oplus \H_+, \]
where $\H_\pm$ are infinite-dimensional closed subspaces orthogonal to each other. We will denote with $P_\pm$ the orthogonal projectors on $\H_\pm$.

For a parameter $1\leq p < \infty$ we denote by $L^p(\H)$ the $p$-Schatten ideal. By $L^0(\H)$ we denote the ideal of compact operators. It is known that these spaces are reflexive for $p>1$. For $p=1$, the predual space of $L^1(\H)$ is the space $L^0(\H)$, while the dual is $L(\H)$. On the other hand, by Sakai theorem, $L^0(\H)$ does not possess a predual since it is not weakly closed in $L(\H)$. All duality pairings mentioned here are given by the trace.

The \emphh{$p$-restricted Grassmannian} $\Grrp$ is defined as the set of closed subspaces $W\subset\H$ such that:
\begin{enumerate}[i)]
\item the orthogonal projection $p_+:W\to \H_+$ is a Fredholm operator;
\item the orthogonal projection $p_-:W\to \H_-$ is $L^p$ class.
\end{enumerate}


One can equivalently describe $\Grrp$ in the following way:
\[W\in\Grr \Longleftrightarrow P_W-P_+ \in L^p,\]
see \cite{spera-valli,wurzbacher}.

It is known that $\Grrp$ is a manifold modeled on the Banach spaces of $L^p(V,V^\perp)$ with charts given by formula \eqref{chart} (but considered as taking values in $L^p$ spaces), see e.g. \cite{segal,wurzbacher,GJS-partiso} for the case $p=2$ or \cite{segal-wilson} for $p=0$. 

Naturally, due to reflexivity, for $p>1$ the precotangent bundle of $\Grrp$ is equal to the cotangent bundle. For $p=0$ the precotangent bundle does not exist, since the space of compact operators doesn't have a predual. In the case $p=1$, following the same line of reasoning as in the previous section, we get the following result
\begin{thm}
The precotangent bundle $T_*\Grrj$ exists and locally it is defined as
\[ T_*\Grrj_{|\Omega_V} = (T\phi_V)^*(L^0(V^\perp, V)). \]
\end{thm}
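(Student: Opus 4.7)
The plan is to follow the argument of the preceding theorem almost verbatim, replacing the trace-class ideal $L^1(V^\perp,V)$ by the compact-operator ideal $L^0(V^\perp,V)$. First I would note that since $\Grrj$ is modeled fiberwise on $L^1(V,V^\perp)$, whose canonical predual under the trace pairing is exactly $L^0(V^\perp,V)$, the local prescription $(T\phi_V)^*(L^0(V^\perp,V))$ produces a Banach vector bundle over $\Omega_V$ whose fibers are predual to the tangent fibers of $\Grrj$.

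The core step is to show that this local definition patches to a global subbundle of $T^*\Grrj$. For this it suffices to verify that the cotangent transition formula \eqref{trans-cotan},
\[\mu' = \big(P_W(P_V+A)\big)^{-1}\mu P_{W^\perp}\big(1-(1_V+A)\big(P_W(P_V+A)\big)^{-1}P_W\big),\]
maps $L^0(V^\perp,V)$ into $L^0(W^\perp,W)$. The formula continues to apply in the restricted setting because the charts on $\Grrj$ are the restrictions of the Hilbert-space charts \eqref{chart} and the cotangent transitions are obtained by dualizing with respect to the trace pairing. All factors other than $\mu$ in the expression are bounded operators on $\H$, and since $L^0(\H)$ is a two-sided ideal in $L(\H)$, the resulting $\mu'$ is again compact. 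Hence the subspace of compact operators is preserved by every transition, and the global subbundle $T_*\Grrj\subset T^*\Grrj$ is obtained by gluing, with the cocycle condition inherited from $T^*\Grrj$.

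I do not expect any serious obstacle beyond invoking this ideal-theoretic observation. The only point that deserves a brief comment is that the derivation of the transition formula in the Hilbert case used the full pairing between $L^1(V^\perp,V)$ and $L(V,V^\perp)$ via the trace; in the restricted case one instead uses the smaller pairing between $L^0(V^\perp,V)$ and $L^1(V,V^\perp)$, but the algebraic identity on operator expressions is the same, so the formula transfers unchanged through the inclusion $L^1\hookrightarrow L(\H)$.
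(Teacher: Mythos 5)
Your proposal matches the paper's argument: both reduce the theorem to checking that the cotangent transition formula \eqref{trans-cotan} preserves compactness, which follows because $L^0$ is a two-sided ideal in $L(\H)$, with the local fibers being predual to $L^1(V,V^\perp)$ via the trace pairing. No substantive difference from the paper's proof.
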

\begin{proof}
Analogously to the previous proof, we need to make sure that the transition maps for the cotangent bundle $T^*\Grrj$ preserve the class of compact operators. Transition maps are given by the formula \eqref{trans-cotan}, but this time with $\mu\in L^0(V^\perp,V)$.
From the properties of compact operators, it is immediate that $\mu'$ is also compact. Thus the local definition of $T_*\Grrj$ is correct.
\end{proof}

\section{Grassmannian of a reflexive Banach space}\label{sec:reflex}

This case generalizes the results of Section~\ref{sec:hilbert} to the case of an arbitrary reflexive Banach space $\E$.

\begin{prop}\label{prop:reflexive_predual}
Let $\E$ be reflexive. 
Then for any two closed subspaces $F,G\subset \E$, the space of bounded linear maps $L(F,G)$ admits a predual, which is $F \hat\otimes_\pi G_*$.
\end{prop}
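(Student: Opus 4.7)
The plan is to reduce the statement to a classical duality between the projective tensor product and spaces of bounded operators, which holds for arbitrary Banach spaces and requires no approximation property.

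First I would observe that since $\E$ is reflexive and $G\subset\E$ is closed, $G$ is itself reflexive. In particular $G$ admits a predual, which may be canonically identified with $G^*$; write $G_* := G^*$, so that $(G_*)^* = G^{**} = G$ via the canonical evaluation map. Reflexivity of $F$ is automatic as well, but it will not be used in what follows.

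The main ingredient is the standard isometric isomorphism valid for any two Banach spaces $X$ and $Y$,
\[ (X \hat\otimes_\pi Y)^* \cong L(X, Y^*), \]
in which a bounded operator $T\colon X \to Y^*$ corresponds to the functional determined on elementary tensors by $\langle T, x\otimes y\rangle = \langle T(x), y\rangle$, extended to all of $X\hat\otimes_\pi Y$ by continuity using density of the algebraic tensor product together with the universal property of the projective norm (see e.g.\ Ryan's monograph on tensor products of Banach spaces). Applying this with $X = F$ and $Y = G_*$ gives
\[ (F \hat\otimes_\pi G_*)^* \cong L(F, (G_*)^*) = L(F, G), \]
which is exactly the assertion of the proposition.

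I do not anticipate a genuine obstacle: the result is essentially a reformulation of a textbook duality. The only mild point to be careful about is that preduals are not unique in general, so one must fix a canonical choice; the hypothesis that $\E$ is reflexive is used precisely to ensure that $G$ has a canonical predual $G_* = G^*$ with $(G_*)^* = G$, which is what makes the composition of identifications work. Everything else — bilinearity, the isometric character of the norm, and naturality of the identification — is standard and can simply be cited.
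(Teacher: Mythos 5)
Your proof is correct and follows essentially the same route as the paper: reflexivity of the closed subspace $G$ (hence $(G_*)^*=G$ with $G_*=G^*$) combined with the standard duality $(X\hat\otimes_\pi Y)^*\cong L(X,Y^*)$ from Ryan, applied with $X=F$, $Y=G_*$. Your remark that reflexivity of $F$ is not actually needed is a correct (and slightly sharper) observation than what the paper states.
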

\begin{proof}
Every closed subspace of a reflexive space is also reflexive, see e.g. \cite[1.11.16 Theorem]{megginson}. Thus $F_*=F^*$ and $G_*=G^*$. 

Moreover, the following identification holds
\[ L(X,Y^*) = (X \hat\otimes_\pi Y)^*,\]
where $X\hat\otimes_\pi Y$ denotes the projective tensor product of Banach spaces $X,Y$, see \cite[Section 2.2]{ryan}. On the elements of the algebraic tensor product, the duality pairing is given as follows
\begin{equation}\label{pair} \big\langle T, \textstyle
\sum x_i\otimes y_i \big\rangle = \textstyle\sum \langle Tx_i, y_i \rangle\end{equation}
for $T\in L(X,Y^*)$ and finitely many $x_i\in X$, $y_i\in Y$.

Specializing to our case, we obtain immediately:
\[(L(F,G))_* = F \hat\otimes_\pi G_*.\]
\end{proof}

Moreover if either $F$ or $G_*$ additionally satisfies the approximation property, it is known that $(F \hat\otimes_\pi G_*)$ is the space of nuclear operators $\mathcal N(G,F)$, see \cite[Corrolary 4.8]{ryan}. However assuming the approximation property for $\E$ does not imply that its subspaces (or their duals) satisfy it as well.

In general the projective tensor product of reflexive Banach spaces might not be reflexive itself. The easiest example is a projective tensor product of two copies of a Hilbert space $\H$ which is the space $L(\H,\H^*)$. On the other hand, it is known for example that if every element of $L(F,G)$ is compact, then $L(F,G)$ (and thus $F \hat\otimes_\pi G_*$ as well) is reflexive, see \cite[Theorem 4.19]{ryan}.

\begin{thm}
For the Grassmannian $\Gr(\E)$ of the reflexive Banach space $\E$, the precotangent bundle $T_*\Gr(\E)$ exists.
\end{thm}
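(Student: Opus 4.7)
The plan is to imitate the arguments used for the Hilbert and $p$-restricted Grassmannian cases: locally define the precotangent bundle using the predual furnished by Proposition \ref{prop:reflexive_predual}, then verify that the cotangent transition maps \eqref{trans_cotan} preserve these predual fibers so that they assemble into a global Banach subbundle of $T^*\Gr(\E)$.

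Concretely, one sets
\[ T_*\Gr(\E)_{|\Omega_G} := (T\phi_{F,G})^*\bigl(F \hat\otimes_\pi G_*\bigr), \]
which by Proposition \ref{prop:reflexive_predual} is a Banach subbundle of $T^*\Gr(\E)_{|\Omega_G}$ with fibers predual to those of $T\Gr(\E)_{|\Omega_G}$. The task is then to show that on an overlap $\Omega_G \cap \Omega_{G'}$ the cotangent transition map \eqref{trans_cotan} sends $F \hat\otimes_\pi G_*$ into $F' \hat\otimes_\pi G'_*$.

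The key observation is that the tangent transition $\Psi_{(F,G),(F',G')} : L(F',G') \to L(F,G)$, obtained by differentiating \eqref{trans:banach} with the roles of primed and unprimed subspaces exchanged, has the product form
\[ \Psi_{(F,G),(F',G')}(Y) \;=\; S\, Y\, T, \]
where $T \in L(F,F')$ and $S \in L(G',G)$ are built from $\pi_F(G)$, $1+A'$, and $(\pi_G(F)(1+A'))^{-1}$ (the two terms appearing from the product rule share the right factor $(\pi_G(F)(1+A'))^{-1}$ and combine on the left). Dualizing and using the pairing \eqref{pair}, a functional $\mu = \sum f_i \otimes g_{*i} \in F \hat\otimes_\pi G_*$ transforms into
\[ \mu' \;=\; \sum T f_i \otimes S^* g_{*i}, \]
since $\pair{Y}{\mu'} = \pair{S Y T}{\mu} = \sum \pair{Y T f_i}{S^* g_{*i}}$. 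Reflexivity of $\E$ (and hence of $G,G'$ by \cite[1.11.16]{megginson}) implies $S^*$ carries $G_* = G^*$ into $G'_* = (G')^*$, so the bilinear assignment $(f,g_*) \mapsto (Tf)\otimes(S^* g_*)$ is bounded into $F' \hat\otimes_\pi G'_*$. By the universal property of $\hat\otimes_\pi$ it extends to a bounded linear map $F \hat\otimes_\pi G_* \to F' \hat\otimes_\pi G'_*$, so the cotangent transitions restrict to isomorphisms of the predual fibers.

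The only real obstacle is bookkeeping: writing the tangent transition in the factored form $Y \mapsto S Y T$ and keeping straight the direction each of $T, S$ acts in. No analytic input beyond Proposition \ref{prop:reflexive_predual} and the functoriality of the projective tensor product under one-sided composition is needed; once the preduals are preserved by the transitions, the local pieces $T_*\Gr(\E)_{|\Omega_G}$ glue into a global Banach subbundle $T_*\Gr(\E) \subset T^*\Gr(\E)$.
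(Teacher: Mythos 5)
Your proposal is correct and follows essentially the same route as the paper: define $T_*\Gr(\E)_{|\Omega_G}$ locally via the predual $F\hat\otimes_\pi G_*$ from Proposition~\ref{prop:reflexive_predual}, observe that the fiberwise tangent transitions are left/right multiplications by bounded operators, and dualize via the pairing \eqref{pair} to see that the cotangent transitions act as (projective) tensor products of operators preserving the predual fibers. The only cosmetic difference is that the paper writes the fiber transition as a finite sum $\sum_i T_i X S_i$ while you note it collapses to a single product $SYT$ (which the explicit Hilbert-space formula confirms); both versions yield the same conclusion.
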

\begin{proof}
Analogously to previous sections, by virtue of Proposition~\ref{prop:reflexive_predual}, we define the precotangent bundle locally using the charts \eqref{chart_banach} as follows
\[ T_*\Gr(\E)_{|\Omega_{G}} = (T\phi_{F,G})^*(F \hat\otimes_\pi G_*). \]
It is now sufficient to prove that the transition maps $\tilde\Psi_{(F',G'),(F,G)}$ for $T^*\Gr(\E)$ preserve the projective tensor product $F \hat\otimes_\pi G_*$. As stated in \eqref{trans_cotan} they are dual maps of (the inverse of) transition maps $\Psi_{(F,G),(F',G')}$ for $T\Gr(\E)$. It is straightforward to derive an explicit expression for transition maps $\Psi_{(F,G),(F',G')}$ from \eqref{trans:banach}. We will not however write an explicit formula here as it is sufficient to note only that it is of the form
\[ \Psi_{(F,G),(F',G')}(A,X) = \big(\psi_{(F,G),(F',G')}A, \sum\limits_i T_i X S_i\big),
\]
where $X\in L(F',G')$, the sum is finite and $T_i\in L(G',G)$, $S_i\in L(F,F')$ (they may depend on $A$). In other words, the transition maps at each fiber are given by a combination of left and right multiplication by bounded operators. 

From \eqref{trans_cotan} we have that the transition maps for $T^*\Gr(\E)$ are given as dual maps
\[\tilde\Psi_{(F',G'),(F,G)}(A,\mu) = \big(\psi_{(F,G),(F',G')}A, \mu')\]
for 
\[\mu' = \big(\sum\limits_j T_j \,\cdot\, S_j\big)^*\mu.\]
From the definition of the dual map and the formula for duality pairing \eqref{pair} applied to the elements of the algebraic tensor product $F\otimes G_*$, we have
\begin{multline*}
\big\langle X, \big(\sum\limits_j T_j \,\cdot\, S_j\big)^*\textstyle\sum\limits_i x_i\otimes y_i \big\rangle = \big\langle \sum\limits_j T_j X S_j,\textstyle\sum\limits_i x_i\otimes y_i \big\rangle\\ = 
\textstyle\sum\limits_{i,j} \big\langle T_j X S_j x_i , y_i \big\rangle
=\textstyle\sum\limits_{i,j} \big\langle X S_j x_i , T_j^* y_i \big\rangle=
\big\langle X, \sum\limits_{i,j} S_j x_i \otimes T_j^* y_i \big\rangle
\end{multline*}
for $x_i\in F$, $y_i\in G_*$.
Since algebraic tensor product is dense in projective tensor product, we get that the dual map $\Psi_{(F,G),(F',G')}$ on the predual space is a tensor product of operators
\[\mu' = \big(\textstyle\sum\limits_i S_i\otimes T_i^*\big)(\mu),\]
see also \cite[Proposition 2.3]{ryan}. Naturally the maps of the type $S_i\otimes T_i^*$ map the space $F \hat\otimes_\pi G_*$ to $F' \hat\otimes_\pi G'_*$. In this manner we obtain a Banach bundle $T_*\Gr(\E)$ as a subbundle of $T^*\Gr(\E)$.
\end{proof}

\begin{rem}
The manifold $\Gr(\E)$ is disconnected. The sets $\Gr_k(\E)$ and $\Gr^k(\E)$ of respectively $k$-dimensional and $k$-codimensional subspaces are connected submanifolds, see \cite{ratiu-mta}. Thus the precotangent bundle exists also for $\Gr_k(\E)$ and $\Gr^k(\E)$, including in particular the projective space $\CP(\E)$ of a Banach space $\E$, i.e. the space of one-dimensional subspaces of $\E$.
\end{rem}


\end{document}